\documentclass[11pt,reqno]{amsart}

\usepackage[T1]{fontenc}
\usepackage[utf8]{inputenc}
\usepackage{lmodern}
\usepackage{microtype}
\usepackage[a4paper,margin=1.15in]{geometry}
\usepackage{amsmath,amssymb,amsthm,mathtools}
\usepackage{mathrsfs}
\usepackage{enumitem}
\usepackage{aliascnt}
\usepackage{xcolor}
\usepackage{hyperref}
\usepackage[nameinlink,capitalize,noabbrev]{cleveref}

\definecolor{linkblue}{RGB}{20,63,110}
\definecolor{citegreen}{RGB}{35,92,67}
\hypersetup{
  colorlinks=true,
  linkcolor=linkblue,
  citecolor=citegreen,
  urlcolor=linkblue,
  pdfauthor={Nobuo Iida},
  pdftitle={On Auroux's Luttinger-Surgery Uniqueness Question},
  pdfsubject={Symplectic topology and four-manifolds},
  pdfkeywords={Luttinger surgery, symplectic four-manifold, symplectic Kodaira dimension, Craighero--Gattazzo surface, rational surface, symplectic blowup, Auroux uniqueness question}
}

\numberwithin{equation}{section}
\allowdisplaybreaks[1]
\newtheorem{theorem}{Theorem}[section]

\newaliascnt{proposition}{theorem}
\newtheorem{proposition}[proposition]{Proposition}
\aliascntresetthe{proposition}

\newaliascnt{lemma}{theorem}

\aliascntresetthe{lemma}

\newaliascnt{corollary}{theorem}
\newtheorem{corollary}[corollary]{Corollary}
\aliascntresetthe{corollary}

\theoremstyle{definition}
\newaliascnt{definition}{theorem}

\aliascntresetthe{definition}

\newaliascnt{question}{theorem}
\newtheorem{question}[question]{Question}
\aliascntresetthe{question}
\crefname{question}{Question}{Questions}

\theoremstyle{remark}
\newaliascnt{remark}{theorem}

\aliascntresetthe{remark}

\newcommand{\R}{\mathbb{R}}
\newcommand{\Z}{\mathbb{Z}}
\newcommand{\CP}{\mathbb{CP}}
\newcommand{\CPbar}{\overline{\mathbb{CP}}}
\newcommand{\PD}{\operatorname{PD}}

\newcommand{\ks}{\kappa^{\mathrm{s}}}

\newcommand{\homeo}{\cong_{\mathrm{homeo}}}

\title[Auroux's Luttinger-surgery question]
{On Auroux's Luttinger-Surgery Uniqueness Question}
\author{Nobuo Iida}
\address{Kavli Institute for the Physics and Mathematics of the Universe (WPI), The University of Tokyo, 5-1-5 Kashiwanoha, Kashiwa, Chiba 277-8583, Japan}
\email{iidanobuo1224@g.ecc.u-tokyo.ac.jp}
\date{August 26, 2026}

\begin{document}

\begin{abstract}
We construct simply connected integral symplectic four-manifolds forming
an exotic pair, with equal values of $c_1^2$, $c_2$,
$c_1\mathbin{\cdot}[\omega]$, and $[\omega]^2$, but with symplectic
Kodaira dimensions $-\infty$ and $2$.  Since symplectic Kodaira dimension
is invariant under Luttinger surgery, the two manifolds are not
Luttinger-surgery equivalent.  This gives a negative answer to Auroux's
uniqueness question.
\end{abstract}

\maketitle

\section{Introduction}

Luttinger surgery is a symplectic cut-and-paste operation along a
Lagrangian torus, introduced by Luttinger \cite{Luttinger1995} and
developed as a tool in symplectic four-manifold topology by
Auroux--Donaldson--Katzarkov \cite{AurouxDonaldsonKatzarkov2003}.  It
preserves the Euler characteristic, the signature, the symplectic
volume, and the pairing of the first Chern class with the symplectic
class.  Its flexibility led Auroux to ask whether these numerical data
are complete for equivalence under Luttinger surgery.

We call a symplectic form $\omega$ integral if $[\omega]$ lies in the
image of the natural map $H^2(X;\Z)\to H^2(X;\R)$.  For a closed
symplectic four-manifold, $c_2(X)$ denotes the Chern number
$\langle c_2(TX,J),[X]\rangle$ for any $\omega$-compatible almost
complex structure $J$; this number equals $\chi(X)$.  Auroux's original
question can then be stated as follows.

\begin{question}[Auroux \cite{Auroux2005Open}]\label{ques:Auroux}
Let $(X_1,\omega_1)$ and $(X_2,\omega_2)$ be closed integral symplectic
four-manifolds such that
\[
 \bigl(c_1(\omega_1)^2,c_2(X_1),
       c_1(\omega_1)\mathbin{\cdot}[\omega_1],[\omega_1]^2\bigr)
 =
 \bigl(c_1(\omega_2)^2,c_2(X_2),
       c_1(\omega_2)\mathbin{\cdot}[\omega_2],[\omega_2]^2\bigr).
\]
Can $(X_2,\omega_2)$ always be obtained from $(X_1,\omega_1)$ by a
finite sequence of Luttinger surgeries?
\end{question}

This is also recorded as the Auroux uniqueness question in
McDuff--Salamon \cite[Problem~11]{McDuffSalamon2017}.  For a closed
symplectic four-manifold, one has
\[
c_2(X)=\chi(X),\qquad
c_1(\omega)^2=2\chi(X)+3\sigma(X),
\]
while
\[
\operatorname{Vol}(X,\omega)
=\frac12[\omega]^2.
\]
Thus the numerical assumptions in Question~\ref{ques:Auroux}
are equivalent to requiring that the two manifolds have the same
Euler characteristic, signature, symplectic volume, and value of
$c_1(\omega)\cdot[\omega]$, as in the formulation of
McDuff--Salamon.
Auroux's uniqueness question is a natural unstable
counterpart of Auroux's stable classification theorem: integral
symplectic four-manifolds with the same four numbers become
symplectomorphic after suitable blowups and stabilizing symplectic sums
\cite{Auroux2005Stable}.

Ho--Li proved that symplectic Kodaira dimension is invariant under
Luttinger surgery and explicitly discussed its relevance to
\cref{ques:Auroux} \cite[Theorem~1.1 and Remark~3.4]{HoLi2012}.  They
also proved the stronger statement that every Luttinger surgery on a
symplectic four-manifold of Kodaira dimension $-\infty$ produces a
symplectomorphic manifold \cite[Theorem~1.2]{HoLi2012}.  Baykur later
obtained negative results for the more structured problem of relating
Lefschetz fibrations or pencils by fibered Luttinger surgeries, while
emphasizing that those results did not settle Auroux's question for the
underlying symplectic manifolds \cite[Remark~4.3]{Baykur2016}.

Ho--Li's theorem supplies the obstruction, but a counterexample to
\cref{ques:Auroux} still requires integral symplectic forms on manifolds
of different symplectic Kodaira dimensions for which both the
symplectic square and the canonical pairing agree.  We carry out this
Diophantine matching explicitly on an exotic pair of rational-surface
type.

Let $S$ denote the Craighero--Gattazzo surface.  This is a simply
connected numerical Godeaux surface with ample canonical bundle and
$K_S^2=1$ \cite{CraigheroGattazzo1994,DolgachevWerner1999,
DolgachevWerner2001,RanaTevelevUrzua2017}.  Set
\[
  X^-:=\CP^2\#10\CPbar^{\,2},
  \qquad
  X^+:=S\#2\CPbar^{\,2}.
\]
The two smooth manifolds are orientation-preservingly homeomorphic but
not diffeomorphic.  Indeed, their symplectic Kodaira dimensions are
$-\infty$ and $2$, so Li's form-independence theorem rules out an
orientation-preserving diffeomorphism
\cite[Theorem~2.6]{Li2006}.  Since both manifolds have signature $-9$,
an orientation-reversing diffeomorphism is impossible
as well.  Thus $X^-$ and $X^+$ form an exotic pair.

Our main result matches all four of Auroux's numerical invariants by
explicit integral cohomology classes represented by symplectic forms.

\begin{theorem}[Main theorem]\label{thm:main}
There exists an integer $m_0\ge2$ such that, for every integer
$m\ge m_0$, there are integral symplectic forms $\omega_m^-$ on $X^-$
and $\omega_m^+$ on $X^+$ with the following properties.
\begin{enumerate}[label=\textup{(\roman*)},leftmargin=2.8em]
  \item Both manifolds are simply connected, and
  \[
      X^-\homeo X^+\homeo \CP^2\#10\CPbar^{\,2}.
  \]
  \item Their numerical invariants agree and are given by
  \begin{equation}\label{eq:main-numerics}
  \begin{aligned}
    c_1(\omega_m^-)^2=c_1(\omega_m^+)^2&=-1,\\
    c_2(X^-)=c_2(X^+)&=13,\\
    c_1(\omega_m^-)\mathbin{\cdot}[\omega_m^-]
      =c_1(\omega_m^+)\mathbin{\cdot}[\omega_m^+]
      &=-(3m+4),\\
    [\omega_m^-]^2=[\omega_m^+]^2&=9m^2+12m+2.
  \end{aligned}
  \end{equation}
  \item Their symplectic Kodaira dimensions satisfy
  \[
      \ks(X^-,\omega_m^-)=-\infty,
      \qquad
      \ks(X^+,\omega_m^+)=2.
  \]
\end{enumerate}
Consequently, $(X^-,\omega_m^-)$ and $(X^+,\omega_m^+)$ are not
Luttinger-surgery equivalent.
\end{theorem}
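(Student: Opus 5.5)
The plan is to handle the two sides separately. On each side I will write down an explicit integral cohomology class depending linearly on $m$, check the two Diophantine identities in \eqref{eq:main-numerics}, and then prove that the class is represented by a symplectic form. The topological statements are standard. Both manifolds are simply connected; for $X^+$ this holds because $S$ is simply connected. Both have odd intersection form with $b_2=11$ and $\sigma=-9$, so Freedman's classification gives (i). Then $c_2=\chi=13$ and $c_1^2=2\chi+3\sigma=-1$ on both sides, independently of the forms. The final assertion of the theorem follows from (iii) together with Ho--Li's invariance of $\ks$ under Luttinger surgery \cite{HoLi2012}. So the real work is the two middle numbers in \eqref{eq:main-numerics} and the realizability of the classes.

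\emph{The side $X^+$.} Let $\pi\colon X^+\to S$ be the blowup at two points, with exceptional classes $E_1,E_2$, so that $K_{X^+}=\pi^*K_S+E_1+E_2$. I take
\[
[\omega_m^+]=(3m+2)\,\pi^*K_S-E_1-E_2 .
\]
Using $K_S^2=1$ and $E_i^2=-1$, one gets $K_{X^+}\cdot[\omega_m^+]=(3m+2)+1+1=3m+4$. This gives $c_1\cdot[\omega_m^+]=-(3m+4)$. Similarly $[\omega_m^+]^2=(3m+2)^2-2=9m^2+12m+2$. Since $K_S$ is ample, the class $p\,\pi^*K_S-E_1-E_2$ is ample for $p$ large; this follows from Seshadri constants, or simply because $pK_S$ is very ample for $p\ge p_0$ and $\pi^*(pK_S)-E_1-E_2$ is then base-point free and eventually ample. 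A Kähler form in this integral class is the required $\omega_m^+$. This forces $m\ge m_0$. Finally, $S$ is the symplectic minimal model of $X^+$ and $K_S^2=1>0$, $K_S\cdot[\omega]>0$, so $\ks=2$.

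\emph{The side $X^-$.} Use the standard basis $H,E_1,\dots,E_{10}$, so that $K=-3H+\sum E_i$. I look for
\[
[\omega_m^-]=aH-\sum_i b_iE_i
\]
with $a,b_i$ affine in $m$, subject to $-3a+\sum b_i=3m+4$ and $a^2-\sum b_i^2=9m^2+12m+2$. A Cauchy--Schwarz heuristic with equal $b_i$ forces $a\approx(9+\sqrt{180})m$, which is irrational. So I will take nine equal weights $b_1=\dots=b_9=b$ and one different weight $b_{10}=c$, with $a,b,c$ affine in $m$. Matching the $m^2$, $m$ and constant coefficients then becomes a small system of quadratic Diophantine equations. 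I would search for integer solutions directly, allowing one or two further weights to differ if necessary.

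For realizability I will invoke the description of the symplectic cone of $\CP^2\#n\CPbar^{\,2}$ (Li--Li, and Karshon--Kessler for the reduced-class criterion). A class is represented by a symplectic form with canonical class $K$ if, after Cremona moves, it becomes reduced: $b_1\ge\dots\ge b_{10}>0$, $a\ge b_1+b_2+b_3$, and $[\omega]^2>0$. For $m$ large the affine family will be chosen to satisfy these inequalities. Any symplectic form on a rational manifold has $\ks=-\infty$, which gives (iii) on this side.

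The main obstacle is this last Diophantine step. One needs a single integral family, valid for all $m\ge m_0$, that simultaneously hits both quadratic targets exactly and remains reduced. This is delicate because $K\cdot\omega>0$ pushes the class toward the boundary of the symplectic cone.
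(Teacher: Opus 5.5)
You never produce the class on $X^-$, and that step is the theorem. You flag it as the main obstacle and say you ``would search'', but the existence of an integral, realizable class with $K\cdot\Omega=3m+4$ and $\Omega^2=9m^2+12m+2$ is exactly what has to be proved. As written, the rational side is unproved.

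Moreover, the ansatz you commit to does not deliver what you claim. Suppose $a,b,c$ are integer-affine in $m$, with nine weights equal to $b$ and $b_{10}=c$. Matching the coefficients of $m^2$, $m$ and $1$ in both equations then has only one solution with positive weights: $a=27m+30$, $b=8m+9$, $c=12m+13$. Since $c>b$, reducedness would require $a\ge c+2b$, but $a-c-2b=-(m+1)<0$. So your sentence that the affine family ``will be chosen to satisfy these inequalities'' fails for this ansatz.

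Your Cremona remark can rescue it. Four Cremona moves, each with defect $-(m+1)$, carry this class to
\[
(23m+26;\;8m+9,\;8m+9,\;7m+8,\dots,7m+8).
\]
This vector is ordered and reduced, with $b_1+b_2+b_3=a$. It satisfies $-3a+\sum b_i=3m+4$ and $a^2-\sum b_i^2=9m^2+12m+2$, so the Karshon--Kessler criterion applies for every $m\ge0$. But this computation is the content of the proof, and it is missing from your proposal.

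The paper avoids any search by dropping affineness in $m$ and fixing $A_m-3C_m=3$. Then $A_m^2-9C_m^2=3(6C_m+3)$ is linear in $C_m$, and the pairing becomes $K\cdot\Omega_m=D_m-9$, which forces $D_m=3m+13$. The square condition $\Omega_m^2=18C_m+9-D_m^2$ then forces the integer $C_m=m^2+5m+9$. The resulting vector $(A_m;C_m,\dots,C_m,D_m)$ is reduced since $3C_m<A_m$, and ordered since $C_m-D_m=m^2+2m-4>0$ for $m\ge2$.

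Otherwise your route matches the paper's. You use the same class $(3m+2)\pi^*K_S-E_1-E_2$ on $X^+$; your Seshadri-constant ampleness argument is a valid substitute for blowing up two capacity-one balls in $(S,\ell\eta)$. The topological bookkeeping and the appeal to Ho--Li are also the same. One small omission: for an odd form, Freedman's classification allows two homeomorphism types. You should note that both Kirby--Siebenmann invariants vanish because the manifolds are smooth.
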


\begin{corollary}\label{cor:Auroux-negative}
The answer to \cref{ques:Auroux} is negative even for simply connected
symplectic four-manifolds that form an exotic pair.
\end{corollary}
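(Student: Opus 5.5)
The corollary follows directly from \cref{thm:main} together with the Ho--Li invariance theorem. No new construction is needed; the only work is to check that each hypothesis of \cref{ques:Auroux} is met while its conclusion fails.

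Fix any integer $m\ge m_0$, where $m_0$ is provided by \cref{thm:main}, and take the integral symplectic forms $\omega_m^-$ on $X^-=\CP^2\#10\CPbar^{\,2}$ and $\omega_m^+$ on $X^+=S\#2\CPbar^{\,2}$ given there.

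\begin{enumerate}[label=\textup{(\arabic*)},leftmargin=2.8em]
  \item By \eqref{eq:main-numerics}, the two quadruples
  \[
  \bigl(c_1(\omega^\pm_m)^2,\ c_2(X^\pm),\ c_1(\omega^\pm_m)\cdot[\omega^\pm_m],\ [\omega^\pm_m]^2\bigr)
  \]
  coincide; both equal $\bigl(-1,\ 13,\ -(3m+4),\ 9m^2+12m+2\bigr)$. Both forms are integral. Hence the pair $(X^-,\omega_m^-)$, $(X^+,\omega_m^+)$ satisfies every hypothesis of \cref{ques:Auroux}.
  \item By part (i) of \cref{thm:main}, both manifolds are simply connected and homeomorphic to $\CP^2\#10\CPbar^{\,2}$.
  \item The pair is exotic, as explained in the introduction. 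By part (iii), the symplectic Kodaira dimensions are $-\infty$ and $2$. Li's form-independence theorem \cite[Theorem~2.6]{Li2006} makes $\ks$ a smooth invariant under orientation-preserving diffeomorphisms, so no such diffeomorphism exists. Both signatures equal $-9\neq 9$, so no orientation-reversing diffeomorphism exists either.
\end{enumerate}

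It remains to rule out any finite chain of Luttinger surgeries from $(X^-,\omega_m^-)$ to $(X^+,\omega_m^+)$. By Ho--Li \cite[Theorem~1.1]{HoLi2012}, a single Luttinger surgery preserves symplectic Kodaira dimension. By induction on the length of the chain, every manifold obtained from $(X^-,\omega_m^-)$ by finitely many Luttinger surgeries therefore has $\ks=-\infty$. Alternatively, the stronger result \cite[Theorem~1.2]{HoLi2012} shows that each step yields a manifold symplectomorphic to $X^-$ itself. Since $\ks(X^+,\omega_m^+)=2\neq-\infty$, no such chain exists. This gives a negative answer to \cref{ques:Auroux} within the class of simply connected exotic pairs.

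The only point needing care is the applicability of Ho--Li: their invariance statement must concern Luttinger surgery in exactly the sense of \cref{ques:Auroux}, namely along embedded Lagrangian tori with arbitrary framing and integer surgery coefficient. I would confirm this against the definition in \cite{HoLi2012}. All the substantive difficulty, namely the Diophantine matching of $[\omega]^2$ and $c_1\cdot[\omega]$ by integral symplectic classes, has already been absorbed into \cref{thm:main}.
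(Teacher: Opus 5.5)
Your proposal is correct and matches the paper's proof, which simply notes that the pair from \cref{thm:main} satisfies all hypotheses of \cref{ques:Auroux} while the Ho--Li Kodaira-dimension obstruction, iterated over a finite chain as in \cref{cor:sequence-kappa}, rules out any Luttinger-surgery sequence. Your exotic-pair argument (Li's form-independence theorem for orientation-preserving diffeomorphisms, equal signatures $-9$ for orientation-reversing ones) is likewise the paper's own argument, given in the introduction and in \cref{cor:exotic}.
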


To the best of the author's knowledge, the construction below is the
first explicit negative answer to Auroux's question. 

\section{Background}\label{sec:background}

\subsection{Canonical classes and symplectic Kodaira dimension}

Let $(X,\omega)$ be a closed symplectic four-manifold.  The space of
$\omega$-compatible almost-complex structures is contractible, so
\[
 c_1(\omega):=c_1(TX,J)
\]
is independent of the choice of compatible $J$.  We write
\[
 K_\omega:=-c_1(\omega)
\]
for the symplectic canonical class.  The identities
\begin{equation}\label{eq:chern-topology}
 c_1(\omega)^2=2e(X)+3\sigma(X),
 \qquad
 c_2(X)=e(X).
\end{equation}
Here $c_2(X)=\langle c_2(TX,J),[X]\rangle$.  These identities follow
from the Hirzebruch signature theorem and the Chern--Gauss--Bonnet
theorem.

Recall the definition of symplectic Kodaira dimension
\cite{Li2006}.  If $(X,\omega)$ is symplectically minimal, then
\[
\ks(X,\omega)=
\begin{cases}
-\infty,& K_\omega^2<0\text{ or }K_\omega\cdot[\omega]<0,\\
0,&K_\omega^2=0\text{ and }K_\omega\cdot[\omega]=0,\\
1,&K_\omega^2=0\text{ and }K_\omega\cdot[\omega]>0,\\
2,&K_\omega^2>0\text{ and }K_\omega\cdot[\omega]>0.
\end{cases}
\]
For a nonminimal symplectic four-manifold, $\ks$ is defined to be the
Kodaira dimension of a minimal model.  It is independent of the sequence
of symplectic blowdowns and is invariant under symplectic blowup.  It is
also independent of the chosen symplectic form on a fixed oriented smooth
four-manifold admitting symplectic structures; see
\cite[Theorem~2.6]{Li2006} and the references therein.  Rational and
ruled symplectic four-manifolds have
Kodaira dimension $-\infty$, while a minimal K\"ahler surface of general
type has symplectic Kodaira dimension $2$.

The fact that a minimal model enters the definition is crucial here.
The two forms constructed below have the same values of $K_\omega^2$ and
$K_\omega\cdot[\omega]$ on the nonminimal manifolds themselves, but their
minimal models lie at opposite ends of the Kodaira-dimension scale.

\subsection{Luttinger surgery and the Ho--Li obstruction}

A Luttinger surgery removes a Weinstein neighborhood of a Lagrangian
torus and reglues it by a boundary diffeomorphism of prescribed slope so
that the symplectic form extends over the reattached torus
neighborhood $T^2\times D^2$.  We use
the conventions of \cite{Luttinger1995,AurouxDonaldsonKatzarkov2003}.
Only the following theorem is needed.

\begin{theorem}[Ho--Li]\label{thm:HoLi}
\cite[Theorems~1.1 and~1.2]{HoLi2012}
Suppose that $(\widetilde X,\widetilde\omega)$ is obtained from
$(X,\omega)$ by a Luttinger surgery.  Then
\[
  \ks(\widetilde X,\widetilde\omega)=\ks(X,\omega).
\]
If $\ks(X,\omega)=-\infty$, then
$(\widetilde X,\widetilde\omega)$ is in fact symplectomorphic to
$(X,\omega)$.
\end{theorem}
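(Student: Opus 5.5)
The plan is to reduce the invariance of $\ks$ to the minimal case, where $\ks$ is read off from $K_\omega^2$ and $K_\omega\cdot[\omega]$. The $-\infty$ case is handled separately by rigidity for rational and ruled manifolds. Throughout, let $L\subset X$ be the Lagrangian torus and $N\cong T^2\times D^2$ its Weinstein neighborhood. The surgered manifold $\widetilde X$ contains $X\setminus N$ symplectically, and the core torus $\widetilde L$ of the reglued piece is again Lagrangian. Surgery along $\widetilde L$ with the inverse slope recovers $(X,\omega)$, so every statement only needs to be proved in one direction.

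First I check that the numerical data transform correctly. Euler characteristic and signature are unchanged: the gluing is along $T^3$, and Novikov additivity applies with $\sigma(T^2\times D^2)=0$. By \eqref{eq:chern-topology}, $K_\omega^2$ is therefore preserved. For $K_\omega\cdot[\omega]$, I choose a compatible $J$ which, on $N$, is the standard flat structure in which $L$ is totally real. Over $N$ one can then trivialize $\Lambda^2_{\mathbb C}TX$. The canonical class is thus represented by a cycle (the zero set of a section of $K$) supported in $X\setminus N$. The same holds on $\widetilde X$, and $\omega=\widetilde\omega$ on the complement. Pairing the cycle with the forms, which agree on the complement and have cohomologous extensions over the torus neighborhoods, gives $K_{\widetilde\omega}\cdot[\widetilde\omega]=K_\omega\cdot[\omega]$.

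Next, I show that surgery commutes with blowing down. Given a symplectic exceptional class $E$ in $X$, I want an embedded $\omega$-symplectic $(-1)$-sphere in class $E$ disjoint from $L$. I would obtain it by neck-stretching along $\partial N$. A $J$-sphere in class $E$ exists for generic $J$, since the Gromov--Taubes invariant of $E$ is nonzero. In the stretched limit, any component entering $N$ would be a punctured holomorphic curve in $T^*T^2$. Such a curve carries nonnegative energy and has vanishing relative index contributions, which forces an index or area contradiction for the limit of an embedded $(-1)$-curve. Hence the limit building lies entirely in $X\setminus L$. This is the main obstacle, and I would model the argument on the Welschinger/Evans displacement arguments.

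Granted this, blowing down $E$ away from $N$ commutes with the surgery. Applying this repeatedly reduces to $X$ minimal. I then also need $\widetilde X$ to be minimal, which follows by running the same argument on $\widetilde L$ with the inverse surgery. On a minimal manifold $\ks$ depends only on $K^2$ and $K\cdot[\omega]$, so the first statement follows.

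For the second statement, assume $\ks=-\infty$. By the first part, $\widetilde X$ is also rational or ruled. By McDuff's classification it is determined up to diffeomorphism by $\chi$, $\sigma$ and the base genus, which is read off from $b_1$ together with the parity of the intersection form. I would check that Luttinger surgery preserves these: $b_1$ can drop only if a curve class is killed, and this is excluded once we know both manifolds are rational or ruled with the same $\chi$. Then $\widetilde X\cong X$ by a diffeomorphism $\phi$ that identifies the complements of the tori. Finally, I would invoke uniqueness of symplectic forms in a given cohomology class on rational and ruled manifolds, up to diffeomorphism (Lalonde--McDuff, Li--Liu), using that $\phi^*[\widetilde\omega]=[\omega]$ and $\phi^*K_{\widetilde\omega}=K_\omega$. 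This matching of classes is the delicate bookkeeping point: on $H_2$ it holds away from the torus classes by construction, and the torus classes themselves must be checked separately.
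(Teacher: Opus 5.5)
The paper does not prove this statement; it quotes it from Ho--Li, so your proposal has to stand on its own. Your outline for the first assertion is sound, but the argument for the second assertion ($\ks=-\infty$ implies symplectomorphic) has a genuine gap.

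\textbf{The diffeomorphism $\phi$.} McDuff's classification gives at best an abstract diffeomorphism $X\cong\widetilde X$. Your step ``a diffeomorphism $\phi$ that identifies the complements of the tori'' is not available.
\begin{enumerate}[label=(\roman*)]
\item Taken literally, it is false. If $\phi$ were the identity on the common complement $X\setminus N$, it would extend the regluing map over $T^2\times D^2$. That is impossible for $k\neq0$, because the new meridian is glued to $\mu+k\gamma$ rather than to $\pm\mu$.
\item Taken loosely, it gives no control of $\phi^*$ on $H^2$. The identities $\phi^*[\widetilde\omega]=[\omega]$ and $\phi^*K_{\widetilde\omega}=K_\omega$, which the Lalonde--McDuff/Li--Liu uniqueness step needs, are then exactly the unproved content. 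You call this ``bookkeeping'', but it cannot be recovered from numerical data. On $\CP^2\#2\CPbar^{\,2}$ there is a one-parameter family of reduced classes with the same $\Omega^2$ and $K\cdot\Omega$, distinguished by the areas of the three exceptional classes.
\end{enumerate}

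\textbf{Parity and $b_1$.} You never show that the parity of the intersection form survives the surgery. Yet $S^2\times S^2$ and $\CP^2\#\CPbar^{\,2}$ share $\chi$, $\sigma$ and $b_1$, and so do $\Sigma_g\times S^2$ and the twisted bundle. Your $b_1$ argument should simply be this: $b^+=1$ on both sides, so $\sigma$ fixes $b_2$, and then $\chi$ fixes $b_1$.

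\textbf{A route that works.} Use your own first part. $X$ is $X_{\min}$ blown up at balls disjoint from $N$, and $\widetilde X$ is the \emph{minimal} manifold $\widetilde X_{\min}$, of Kodaira dimension $-\infty$, blown up at the same balls.
\begin{enumerate}[label=(\roman*)]
\item If $X_{\min}$ is rational, minimality together with $\chi$ and $\sigma$ forces $\widetilde X_{\min}$ to be $\CP^2$ or $S^2\times S^2$, matching $X_{\min}$. This settles parity.
\item The form there is pinned down by $[\omega]^2$ and $K\cdot[\omega]$; on $S^2\times S^2$ these two numbers fix the unordered pair of factor areas.
\item McDuff's connectedness of spaces of ball embeddings in rational and ruled manifolds then identifies the blowups.
\end{enumerate}
For irrational ruled minimal models, $[\omega]^2$ and $K\cdot[\omega]$ do not detect the bundle type, so a separate parity argument is still required.

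\textbf{Two points to tighten in the first part.}
\begin{enumerate}[label=(\roman*)]
\item In the $K\cdot[\omega]$ step, you must check that the boundary trivializations of the canonical bundle coming from $N$ and from the reglued $T^2\times D^2$ agree. They do, because the regluing differential is a symplectic shear homotopic to the identity. Without this check, the same cycle need not represent both canonical classes.
\item The disjointness lemma does not follow from an unspecified ``index or area contradiction''. The actual mechanism is as follows. The flat $T^*T^2$ contains no holomorphic planes, since all its Reeb orbits are non-contractible. So in a genus-zero limit building of total index $0$, with $J$ generic off the neck:
\begin{itemize}
\item the bottom level consists of cylinders;
\item every top-level component has index $0$;
\item the leaves of the resulting tree are planes with one Morse--Bott end.
\end{itemize}
For such a plane, index $0$ means Maslov index $1$, which is impossible for an orientable Lagrangian.
\end{enumerate}
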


\begin{corollary}\label{cor:sequence-kappa}
Symplectic Kodaira dimension is invariant under every finite sequence of
Luttinger surgeries.  The same conclusion holds for surgeries performed
along any finite disjoint collection of Lagrangian tori.
\end{corollary}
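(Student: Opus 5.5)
The first assertion follows from \cref{thm:HoLi} by induction on the number of surgeries. Suppose a finite sequence
\[
(X_0,\omega_0)\rightsquigarrow(X_1,\omega_1)\rightsquigarrow\cdots\rightsquigarrow(X_k,\omega_k)
\]
is given, where each $(X_{i+1},\omega_{i+1})$ arises from $(X_i,\omega_i)$ by a single Luttinger surgery. Then \cref{thm:HoLi} gives $\ks(X_{i+1},\omega_{i+1})=\ks(X_i,\omega_i)$ for each $i$. Chaining these equalities yields $\ks(X_k,\omega_k)=\ks(X_0,\omega_0)$. The only point to check is that each intermediate manifold is again a closed symplectic four-manifold, so that \cref{thm:HoLi} applies at every step. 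This holds by construction: the Luttinger surgery extends the symplectic form over the reglued $T^2\times D^2$.

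For the second assertion, I would reduce a simultaneous surgery along disjoint Lagrangian tori $T_1,\dots,T_r$ to a sequence of single surgeries. First choose pairwise disjoint Weinstein neighborhoods $U_j\supset T_j$, shrinking them if necessary. This is possible because the tori are compact and disjoint. The surgery along $T_1$ changes $X$ and $\omega$ only inside $U_1$. Hence each $T_j$ with $j\ge2$ is still a Lagrangian torus in the new manifold, with the same Weinstein neighborhood $U_j$ and the same framing data used to specify the slope. Repeating this, the surgeries can be performed one at a time in any order, and the result agrees with the simultaneous surgery: the cut-and-paste maps are supported in disjoint regions and the gluing data are local. The first part then applies to this sequence.

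The main, though mild, obstacle is this locality check. One must make sure that the surgery along $T_1$ leaves the Lagrangian framing and slope of $T_j$ unchanged, since these are defined inside $U_j$, which is disjoint from $U_1$. Granting this, the result is symplectomorphic to the simultaneous surgery, and the corollary follows.
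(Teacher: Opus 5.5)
Your proposal is correct and follows the paper's argument. The paper likewise proves the first part by induction using Ho--Li, and handles the disjoint collection by locality: the form is unchanged outside the chosen neighborhood, so the remaining tori stay Lagrangian and the surgeries can be done sequentially. Your extra care about pairwise disjoint Weinstein neighborhoods and preserved framing and slope data only makes explicit what the paper leaves implicit.
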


\begin{proof}
The first assertion follows by induction from \cref{thm:HoLi}.  For a
disjoint collection, order the tori arbitrarily.  Luttinger surgery is
local: after surgery on one torus, the symplectic form is unchanged on
the complement of its chosen neighborhood.  Hence every remaining torus
is still Lagrangian, and the simultaneous operation can be performed
sequentially.
\end{proof}

\subsection{Reduced blowup forms on rational surfaces}

Let
\[
 X_N=\CP^2\#N\CPbar^{\,2}.
\]
Write $H,E_1,\ldots,E_N$ for the standard basis of $H_2(X_N;\Z)$ and,
by abuse of notation, also for the corresponding classes in cohomology
under the intersection-form identification.  Thus
\[
 H^2=1,\qquad E_i^2=-1,\qquad H\cdot E_i=E_i\cdot E_j=0\quad(i\ne j).
\]
A vector
\[
 (a;b_1,\ldots,b_N),\qquad a,b_i>0,
\]
encodes the class
\begin{equation}\label{eq:encoded-class}
 \Omega=aH-\sum_{i=1}^N b_iE_i.
\end{equation}
It is \emph{ordered} if $b_1\ge\cdots\ge b_N$ and \emph{reduced} if
\begin{equation}\label{eq:reduced}
 b_1+b_2+b_3\le a.
\end{equation}

Karshon--Kessler use the convention that the vector
$(a;b_1,\ldots,b_N)$ encodes the cohomology class of a blowup form
$\omega_{\mathrm{KK}}$ when, in the marking above,
\[
 \frac{1}{2\pi}[\omega_{\mathrm{KK}}]
 =aH-\sum_{i=1}^N b_iE_i.
\]
We rescale by setting $\omega=(2\pi)^{-1}\omega_{\mathrm{KK}}$.
Then $[\omega]=aH-\sum_i b_iE_i$.  Positive rescaling preserves the
set of compatible almost-complex structures, and hence does not change
the symplectic canonical class or the symplectic Kodaira dimension.
Thus their criterion takes the following form in our normalization.

\begin{theorem}[Karshon--Kessler]\label{thm:KK}
\cite[Remark~1.8 and Theorem~1.9]{KarshonKessler2017}
Let $N\ge3$, and let
\[
 \Omega=aH-\sum_{i=1}^N b_iE_i,
\]
where $(a;b_1,\ldots,b_N)$ is an ordered reduced vector with positive
entries.  Then there exists a blowup symplectic form $\omega$ on $X_N$
satisfying $[\omega]=\Omega$ if and only if
\[
 \Omega^2
 =a^2-\sum_{i=1}^N b_i^2
 >0.
\]
\end{theorem}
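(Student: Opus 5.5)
The plan is to deduce the statement directly from \cite[Remark~1.8 and Theorem~1.9]{KarshonKessler2017} by a change of normalization. In Karshon--Kessler's convention, a vector $(a;b_1,\ldots,b_N)$ encodes the class $\frac{1}{2\pi}[\omega_{\mathrm{KK}}]$ of a blowup form. Their theorem, with $N\ge3$ and the vector ordered, reduced and with positive entries, says that such a blowup form exists if and only if $a^2-\sum b_i^2>0$. Given $\Omega=aH-\sum b_iE_i$ satisfying our hypotheses, we would apply their result to the same vector. Setting $\omega=(2\pi)^{-1}\omega_{\mathrm{KK}}$ then produces a form with $[\omega]=\Omega$. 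We would check three small points:
\begin{itemize}
\item a positive constant multiple of a blowup form is again a blowup form, since it corresponds to rescaling the underlying $\CP^2$ form and all ball sizes by the same factor;
\item rescaling does not change the marking $H,E_1,\dots,E_N$;
\item the positivity condition $\Omega^2>0$ is invariant under positive rescaling.
\end{itemize}
Both directions of the equivalence then transfer verbatim.

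To make the proposal less of a black box, I would also record why the criterion is plausible. Necessity is immediate, since $\int\omega^2=\Omega^2$ is twice the volume and hence positive. For sufficiency, the mechanism is McDuff's correspondence between blowup forms and symplectic ball packings. A blowup form in class $aH-\sum b_iE_i$ is the same as a symplectic embedding of disjoint balls of capacities $b_i$ into $(\CP^2,a\,\omega_{\mathrm{FS}})$, where the normalization is that a line has area $a$. The route I would take has three steps:
\begin{itemize}
\item by Li--Li and McDuff, $\Omega$ lies in the symplectic cone of the reference canonical class as soon as $\Omega^2>0$ and $\Omega\cdot E>0$ for every exceptional class $E$;
\item the reduced condition $b_1+b_2+b_3\le a$, together with ordering and positivity, forces $\Omega\cdot E>0$ for all exceptional $E$. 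This follows from the standard Cremona-reduction argument: a reduced vector is a fixed point of the reduction algorithm, so every exceptional class pairs positively with it;
\item a form in this cone with the standard canonical class can be isotoped to a genuine blowup form, by uniqueness of symplectic forms in a class up to deformation on rational surfaces, combined with McDuff's packing/blowup equivalence.
\end{itemize}

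The main obstacle in a self-contained proof would be this last step: passing from ``$\Omega$ is represented by some symplectic form'' to ``$\Omega$ is represented by a blowup form in the given marking''. This is exactly the content Karshon--Kessler supply, and the hypothesis $N\ge3$ enters there. For the purposes of this paper I would therefore not reprove it, and would rely on the citation together with the rescaling check above.
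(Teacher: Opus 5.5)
Your proposal is correct and is essentially the paper's argument. The paper does not reprove the criterion either: it applies Karshon--Kessler's theorem to the same vector $(a;b_1,\ldots,b_N)$ and rescales by $\omega=(2\pi)^{-1}\omega_{\mathrm{KK}}$, noting that positive rescaling changes neither the marking nor the compatible almost-complex structures; your heuristic sketch goes beyond what the paper does and is not needed for the deduction.
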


For a blowup form in the marking above, the symplectic canonical class is
\begin{equation}\label{eq:rational-canonical}
 K_\omega=-3H+E_1+\cdots+E_N.
\end{equation}
This follows either by successively applying the canonical-class blowup
formula or by blowing down the standard symplectic exceptional spheres;
see, for example, \cite{McDuff1990,McDuffSalamon2017}.  The broader
structure of the symplectic cone for $b^+=1$ was developed by Li--Liu
\cite{LiLiu2001}; \cref{thm:KK} is the convenient explicit criterion
needed here.

\subsection{The Craighero--Gattazzo surface}

We collect the properties of the complex surface used in the
construction.

\begin{proposition}\label{prop:CG}
There exists a smooth simply connected minimal complex surface $S$ with
\[
 p_g(S)=q(S)=0,
\]
whose canonical line bundle $\mathcal K_S$ is ample and whose canonical
class
\[
 K_S:=c_1(\mathcal K_S)
\]
satisfies $K_S^2=1$.  Moreover, it satisfies
\[
 e(S)=11,\qquad \sigma(S)=-7,
\]
and is orientation-preservingly homeomorphic to
$\CP^2\#8\CPbar^{\,2}$.
\end{proposition}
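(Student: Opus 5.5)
This proposition is a collection of known facts about the Craighero--Gattazzo surface, so the proof will be a short assembly of citations plus standard surface-theoretic computations.

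\textbf{Existence and the basic invariants.} We take $S$ to be the Craighero--Gattazzo surface \cite{CraigheroGattazzo1994}. By construction it is a smooth minimal surface of general type with $p_g=q=0$ and $K_S^2=1$, so it is a numerical Godeaux surface. Simple connectivity is the deep input. It was conjectured from the construction and proved by Dolgachev--Werner \cite{DolgachevWerner1999,DolgachevWerner2001}; see also the treatment in \cite{RanaTevelevUrzua2017}. Ampleness of $\mathcal K_S$ amounts to the absence of $(-2)$-curves, since minimality already gives nefness. This is established in the literature on this surface, and I would cite it from \cite{RanaTevelevUrzua2017} or \cite{DolgachevWerner1999}. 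I expect locating a precise reference for this step to be the only real obstacle. Ampleness is not used for the Kodaira dimension later, which needs only minimality and general type.

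\textbf{Euler number and signature.} Since $p_g=q=0$, we have $\chi(\mathcal O_S)=1-q+p_g=1$. Noether's formula $12\chi(\mathcal O_S)=K_S^2+e(S)$ then gives $e(S)=12-1=11$. Next, $c_1^2=2e+3\sigma$ with $c_1^2=K_S^2=1$ gives $3\sigma=1-22=-21$, so $\sigma(S)=-7$. Equivalently $b_2=9$, $b^+=2p_g+1=1$ and $b^-=8$.

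\textbf{Topological type.} Since $S$ is simply connected, Freedman's classification applies: the homeomorphism type is determined by the intersection form together with the Kirby--Siebenmann invariant, which vanishes for smooth manifolds. The form is odd: because $K_S^2=1$ is odd and $K_S$ is characteristic, the form cannot be even, as an even form would force $K_S^2\equiv\sigma\equiv 1 \pmod 8$, which fails for $\sigma=-7$. It is also unimodular and indefinite, with rank $9$ and signature $-7$. Such a form is isomorphic to $\langle1\rangle\oplus8\langle-1\rangle$, the form of $\CP^2\#8\CPbar^{\,2}$. Freedman's theorem then gives an orientation-preserving homeomorphism $S\homeo\CP^2\#8\CPbar^{\,2}$.
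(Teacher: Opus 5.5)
Your outline matches the paper's proof: cite the construction, get $e(S)=11$ and $\sigma(S)=-7$ from Noether's formula and $c_1^2=2e+3\sigma$, show the intersection form is odd, and conclude with Freedman. However, the justification you give for oddness is wrong as written. Van der Blij's lemma gives $c\cdot c\equiv\sigma\pmod 8$ for every characteristic element $c$ of \emph{any} unimodular form, even or odd. Here $K_S^2=1\equiv-7=\sigma\pmod 8$, so the congruence holds and produces no contradiction. The correct reason is already in your first clause, and it is the one the paper uses: in an even form every class has even square, while $K_S\cdot K_S=1$. Alternatively, an even unimodular form has $\sigma\equiv0\pmod 8$, and $8\nmid-7$. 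The rest of the topological step is fine, namely the classification of indefinite odd unimodular forms and Freedman's theorem with vanishing Kirby--Siebenmann invariant.

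The attribution of simple connectivity, which is the one deep input and enters purely by citation, is reversed. The original Dolgachev--Werner argument had a gap, acknowledged in their erratum \cite{DolgachevWerner2001}. What survives of it is that $\pi_1(S)$ has trivial profinite completion. Simple connectivity itself is the theorem of Rana--Tevelev--Urz\'ua \cite{RanaTevelevUrzua2017}, so that must be your primary reference rather than a ``see also''. This is exactly how the paper apportions credit. Conversely, ampleness of $\mathcal K_S$ is the main result of \cite{DolgachevWerner1999}; it is in the title, and it is what the paper cites for the canonical polarization. So the step you flagged as the ``only real obstacle'' is not one. Ampleness is indeed irrelevant for $\ks$, but it is not dispensable in the paper: it supplies the K\"ahler form $\eta$ with $[\eta]=K_S$ used in \cref{prop:plus}.
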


\begin{proof}
The surface is the Craighero--Gattazzo numerical Godeaux surface
\cite{CraigheroGattazzo1994}.  Dolgachev--Werner established the
canonical polarization and analyzed its fundamental group
\cite{DolgachevWerner1999,DolgachevWerner2001}; the proof that the
surface is simply connected was completed by Rana--Tevelev--Urz\'ua
\cite{RanaTevelevUrzua2017}.

Since $p_g=q=0$, the holomorphic Euler characteristic is
$\chi(\mathcal O_S)=1$.  Noether's formula gives
\[
 e(S)=c_2(S)=12\chi(\mathcal O_S)-K_S^2=11.
\]
The signature formula $K_S^2=2e(S)+3\sigma(S)$ then gives
$\sigma(S)=-7$.  Thus $b_2(S)=9$, $b_2^+(S)=1$, and $b_2^-(S)=8$.
The intersection form is odd because the characteristic class $K_S$ has
odd square.  Hence it is the odd unimodular form $I_{1,8}$.  Both $S$
and $\CP^2\#8\CPbar^{\,2}$ are smooth, so their Kirby--Siebenmann
invariants vanish.  Freedman's classification \cite{Freedman1982}
therefore gives the asserted orientation-preserving homeomorphism.
\end{proof}

Since the canonical line bundle $\mathcal K_S$ is ample, the integral
class $K_S=c_1(\mathcal K_S)$ is a K\"ahler class.  We therefore fix a
K\"ahler form $\eta$ such that
\begin{equation}\label{eq:eta-K}
 [\eta]=K_S
\end{equation}
in $H^2(S;\R)$, where $K_S$ is understood as the image of the integral
class under $H^2(S;\Z)\to H^2(S;\R)$.

\section{The counterexamples}\label{sec:construction}

\subsection{The general-type side}

Let
\[
 X^+=S\#2\CPbar^{\,2},
\]
and let $F_1,F_2$ denote the two exceptional classes.

\begin{proposition}\label{prop:plus}
There exists $m_0\ge2$ such that, for every $m\ge m_0$, the manifold
$X^+$ admits an integral symplectic form $\omega_m^+$ with
\begin{equation}\label{eq:plus-class}
 [\omega_m^+]=(3m+2)K_S-F_1-F_2.
\end{equation}
Its canonical class and numerical data are
\begin{align}
 K_{\omega_m^+}&=K_S+F_1+F_2,\label{eq:plus-K}\\
 [\omega_m^+]^2&=9m^2+12m+2,\label{eq:plus-square}\\
 K_{\omega_m^+}\cdot[\omega_m^+]&=3m+4,\label{eq:plus-pairing}\\
 e(X^+)&=13,\qquad \sigma(X^+)=-9,\label{eq:plus-topology}\\
 \ks(X^+,\omega_m^+)&=2.\label{eq:plus-kappa}
\end{align}
\end{proposition}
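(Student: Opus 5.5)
The plan is to build $\omega_m^+$ by symplectic blowup of a Kähler form on $S$ with two balls of the right size, and then read off the numerics from the cohomology class. Start from the Kähler form $(3m+2)\eta$ on $S$. By \eqref{eq:eta-K} its class is the integral class $(3m+2)K_S$, and its volume is $\frac12(3m+2)^2K_S^2=\frac12(3m+2)^2$.

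Blowing up a symplectic ball of capacity $\lambda$ (meaning the exceptional sphere has area $\lambda$) changes the class by $-\lambda F$. We therefore need two disjoint symplectically embedded balls of capacity $1$. By Darboux's theorem, around two distinct points there are symplectic charts containing standard balls of some fixed radius $r>0$ for $\eta$. Scaling the form by $3m+2$ scales the capacities of these balls by $3m+2$. So once $(3m+2)\pi r^2\ge 1$, both balls of capacity $1$ embed disjointly. This is where $m_0$ comes from: choose $m_0\ge2$ so that the inequality holds for all $m\ge m_0$. The main (mild) obstacle is keeping track of the normalization of capacity against the area of the exceptional curve, and checking that the resulting class is exactly \eqref{eq:plus-class} with coefficient $-1$ on each $F_i$. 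Since $[\omega_m^+]$ is then an integral combination of $K_S,F_1,F_2$, the form is integral.

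The canonical class follows from the blowup formula: $K$ changes by $+F_i$ under each blowup, and $K_{(3m+2)\eta}=K_S$ because the symplectic canonical class of a Kähler form is the holomorphic one and is unchanged by rescaling. This gives \eqref{eq:plus-K}. Then, using $F_i^2=-1$, $K_S\cdot F_i=0$ and $F_1\cdot F_2=0$:
\begin{itemize}
\item $[\omega_m^+]^2=(3m+2)^2-2=9m^2+12m+2$;
\item $K\cdot[\omega_m^+]=(3m+2)+1+1=3m+4$.
\end{itemize}
The topology \eqref{eq:plus-topology} follows from \cref{prop:CG}, since each blowup adds $1$ to $e$ and $-1$ to $\sigma$.

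Finally, \eqref{eq:plus-kappa} holds because $\ks$ is invariant under symplectic blowup and is computed from the minimal model $(S,(3m+2)\eta)$. There $K_S^2=1>0$ and $K_S\cdot[(3m+2)\eta]=3m+2>0$, so $\ks=2$. (Alternatively, a minimal Kähler surface of general type has $\ks=2$.)
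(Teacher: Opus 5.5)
Your proposal is correct and follows essentially the same route as the paper. You rescale $\eta$ by $3m+2$ so that two disjoint Darboux balls acquire capacity at least $1$, blow them up to get the class $(3m+2)K_S-F_1-F_2$, and read off the canonical class, the numerics, and $\ks=2$ from the blowup formulas and blowup invariance. The only small difference is that you also evaluate $\ks$ directly on $(S,(3m+2)\eta)$; this tacitly uses that this K\"ahler surface is symplectically minimal, which is the same fact the paper relies on when it cites that a minimal surface of general type has $\ks=2$.
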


\begin{proof}
Choose two disjoint symplectic embeddings of standard balls of some
capacity $\varepsilon>0$ into $(S,\eta)$.
\footnote{We use the convention that the capacity of
$B^4(r)\subset(\mathbb C^2,\omega_{\mathrm{std}})$ is $\pi r^2$.
Under the blowup map $\pi\colon\widetilde X\to X$, blowing up a ball of
capacity $c$ changes the cohomology class of the symplectic form by
$-c\,\PD(E)$; equivalently,
$[\widetilde\omega]=\pi^*[\omega]-c\,\PD(E)$, where $E$ is the
exceptional sphere.}
After replacing $\eta$ by $\ell\eta$ and precomposing each embedding
with the dilation $z\mapsto z/\sqrt{\ell}$, we obtain two disjoint
balls of capacity $\ell\varepsilon$ in $(S,\ell\eta)$.  Consequently,
for all sufficiently large integers $\ell$, the symplectic manifold
$(S,\ell\eta)$ contains two disjoint balls of capacity $1$.  Blowing
up these balls produces a symplectic form with cohomology class
\[
  \ell K_S-F_1-F_2.
\]
Choose $m_0\ge2$ so that this construction is available whenever
$\ell=3m+2$ and $m\ge m_0$.  This proves existence and integrality in
\eqref{eq:plus-class}.

An almost-complex structure compatible with $\eta$ is also compatible
with $\ell\eta$, so positive rescaling does not change the canonical
class.  The canonical-class blowup formula then gives
\eqref{eq:plus-K}.  Using
$K_S^2=1$, $F_i^2=-1$, and the orthogonality of the summands, we obtain
\[
 \begin{aligned}
 [\omega_m^+]^2
 &=\bigl((3m+2)K_S-F_1-F_2\bigr)^2\\
 &=(3m+2)^2K_S^2+F_1^2+F_2^2\\
 &=(3m+2)^2-2\\
 &=9m^2+12m+2,
 \end{aligned}
\]
and
\[
 \begin{aligned}
 K_{\omega_m^+}\cdot[\omega_m^+]
 &=(K_S+F_1+F_2)\cdot\bigl((3m+2)K_S-F_1-F_2\bigr)\\
 &=3m+2+1+1=3m+4.
 \end{aligned}
\]
The topological blowup formulas and \cref{prop:CG} yield
\[
 e(X^+)=11+2=13,\qquad \sigma(X^+)=-7-2=-9.
\]
Finally, symplectic Kodaira dimension is invariant under blowup, and
$(S,\eta)$ is a minimal surface of general type.  Hence
$\ks(X^+,\omega_m^+)=2$.
\end{proof}

\subsection{The rational side}

Let
\[
 X^-=\CP^2\#10\CPbar^{\,2}.
\]
For an integer $m\ge2$, define
\begin{equation}\label{eq:ACD}
 \begin{aligned}
 A_m&=3(m^2+5m+10),\\
 C_m&=m^2+5m+9,\\
 D_m&=3m+13,
 \end{aligned}
\end{equation}
and set
\begin{equation}\label{eq:minus-class}
 \Omega_m
 =A_mH-C_m(E_1+\cdots+E_9)-D_mE_{10}.
\end{equation}

\begin{proposition}\label{prop:minus}
For every integer $m\ge2$, the class $\Omega_m$ is represented by an
integral blowup symplectic form $\omega_m^-$ on $X^-$.  Its canonical
class and numerical data are
\begin{align}
 K_{\omega_m^-}&=-3H+E_1+\cdots+E_{10},\label{eq:minus-K}\\
 [\omega_m^-]^2&=9m^2+12m+2,\label{eq:minus-square}\\
 K_{\omega_m^-}\cdot[\omega_m^-]&=3m+4,\label{eq:minus-pairing}\\
 e(X^-)&=13,\qquad \sigma(X^-)=-9,\label{eq:minus-topology}\\
 \ks(X^-,\omega_m^-)&=-\infty.\label{eq:minus-kappa}
\end{align}
\end{proposition}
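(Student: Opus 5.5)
We will deduce existence and integrality from \cref{thm:KK} with $N=10$, and then compute the remaining invariants directly.

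\emph{Applying \cref{thm:KK}.} The vector is
\[
(A_m;\,C_m,\ldots,C_m,\,D_m),
\]
with $C_m$ repeated nine times. All entries are positive integers.

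It is ordered because
\[
C_m-D_m=m^2+2m-4>0 \quad\text{for } m\ge2.
\]
It is reduced because
\[
3C_m=3m^2+15m+27\le 3m^2+15m+30=A_m.
\]
For the square we expand
\[
\Omega_m^2=A_m^2-9C_m^2-D_m^2 .
\]
Write $u=m^2+5m$. Then
\[
9(u+10)^2-9(u+9)^2=9(2u+19)=18m^2+90m+171,
\]
and
\[
D_m^2=9m^2+78m+169 .
\]
Subtracting gives
\[
\Omega_m^2=9m^2+12m+2>0 .
\]
\cref{thm:KK} therefore produces a blowup symplectic form $\omega_m^-$ with $[\omega_m^-]=\Omega_m$. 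This class is integral by construction, and this computation is exactly \eqref{eq:minus-square}.

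\emph{Canonical class and pairing.} The canonical class \eqref{eq:minus-K} is \eqref{eq:rational-canonical} with $N=10$. Using $H^2=1$ and $E_i^2=-1$, the pairing is
\[
K_{\omega_m^-}\cdot\Omega_m=-3A_m+9C_m+D_m .
\]
Substituting \eqref{eq:ACD} gives
\[
-9m^2-45m-90+9m^2+45m+81+3m+13=3m+4,
\]
which proves \eqref{eq:minus-pairing}.

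\emph{Topology and Kodaira dimension.} For $\CP^2\#10\CPbar^{\,2}$ we have $e=3+10=13$ and $\sigma=1-10=-9$, giving \eqref{eq:minus-topology}.

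For \eqref{eq:minus-kappa}, the exceptional spheres can be blown down symplectically, since $\omega_m^-$ is a blowup form. This gives $\CP^2$ with a multiple of the Fubini--Study form as a minimal model. There $K=-3H$ and $K\cdot[\omega]=-3A_m<0$, so $\ks=-\infty$. Alternatively, rational manifolds have $\ks=-\infty$, as recalled in \cref{sec:background}.

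The only step needing care is checking the hypotheses of \cref{thm:KK}, namely ordering, reducedness and positivity. Each reduces to the elementary polynomial inequalities above, valid for $m\ge2$.
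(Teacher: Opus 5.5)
Your proposal is correct and follows essentially the same route as the paper's proof: you verify the ordered, reduced and positivity hypotheses of \cref{thm:KK}, compute $\Omega_m^2$ and $K_{\omega_m^-}\cdot\Omega_m$ by direct expansion, and deduce $\ks=-\infty$ from the rational minimal model. The differences are cosmetic. The paper organizes the algebra through the identity $A_m=3C_m+3$, while you substitute $u=m^2+5m$, and you spell out the check $K\cdot[\omega]=-3A_m<0$ on $\CP^2$ that the paper leaves implicit.
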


\begin{proof}
First observe that
\begin{equation}\label{eq:basic-relations}
 A_m=3C_m+3,
 \qquad
 C_m-D_m=m^2+2m-4>0
\end{equation}
for every $m\ge2$.  Hence
\[
 C_m\ge\cdots\ge C_m\ge D_m>0,
 \qquad
 3C_m<A_m,
\]
so the vector
\[
 \bigl(A_m;\underbrace{C_m,\ldots,C_m}_{9\ \mathrm{times}},D_m\bigr)
\]
associated with $\Omega_m$ is ordered and reduced.  Its square in
cohomology is
\begin{align*}
 \Omega_m^2
 &=A_m^2-9C_m^2-D_m^2\\
 &=(3C_m+3)^2-9C_m^2-D_m^2\\
 &=18C_m+9-D_m^2\\
 &=9m^2+12m+2>0.
\end{align*}
By \cref{thm:KK}, there exists a blowup symplectic form
$\omega_m^-$ on $X^-$ satisfying
\[
  [\omega_m^-]=\Omega_m.
\]
Since $A_m$, $C_m$, and $D_m$ are integers, this form is integral.  This
proves the first assertion and \eqref{eq:minus-square}.

The canonical class is \eqref{eq:minus-K}.  Therefore
\begin{align*}
 K_{\omega_m^-}\cdot[\omega_m^-]
 &=-3A_m+9C_m+D_m\\
 &=-3(A_m-3C_m)+D_m\\
 &=-9+(3m+13)=3m+4,
\end{align*}
which proves \eqref{eq:minus-pairing}.  The standard connected-sum
formulas give
\[
 e(X^-)=3+10=13,\qquad \sigma(X^-)=1-10=-9.
\]
Finally, $X^-$ with a blowup form is a rational symplectic surface, so
its minimal model is $\CP^2$ and its symplectic Kodaira dimension is
$-\infty$.
\end{proof}

\subsection{Matching the invariants}

\begin{proof}[Proof of \cref{thm:main}]
Let $m\ge m_0$.  Propositions~\ref{prop:plus} and~\ref{prop:minus}
produce the required integral forms.  Their Euler characteristics and
signatures agree by \eqref{eq:plus-topology} and
\eqref{eq:minus-topology}.  Hence \eqref{eq:chern-topology} gives
\[
 c_1(\omega_m^\pm)^2=2\cdot13+3(-9)=-1,
 \qquad
 c_2(X^\pm)=13.
\]
Equations~\eqref{eq:plus-square} and~\eqref{eq:minus-square} give equal
symplectic squares.  Since $c_1(\omega)=-K_\omega$,
\eqref{eq:plus-pairing} and \eqref{eq:minus-pairing} give
\[
 c_1(\omega_m^\pm)\cdot[\omega_m^\pm]=-(3m+4).
\]
This proves all identities in \eqref{eq:main-numerics}.

Both manifolds are simply connected.  By \cref{prop:CG},
$S\homeo\CP^2\#8\CPbar^{\,2}$, so after two blowups
\[
 X^+\homeo\CP^2\#10\CPbar^{\,2}=X^-.
\]
Their Kodaira dimensions are given by \eqref{eq:plus-kappa} and
\eqref{eq:minus-kappa}.  Since symplectic Kodaira dimension is preserved
by every finite sequence of Luttinger surgeries, the two symplectic
manifolds are not Luttinger-surgery equivalent.  More strongly, the
second assertion of \cref{thm:HoLi} shows that every finite sequence
starting from $(X^-,\omega_m^-)$ remains in its symplectomorphism class.
This completes the proof.
\end{proof}

\begin{proof}[Proof of \cref{cor:Auroux-negative}]
The two manifolds in \cref{thm:main} satisfy all hypotheses of
\cref{ques:Auroux}, but its conclusion fails by the Kodaira-dimension
obstruction.
\end{proof}

\begin{corollary}\label{cor:exotic}
The smooth four-manifolds $X^-$ and $X^+$ are homeomorphic but not
diffeomorphic.
\end{corollary}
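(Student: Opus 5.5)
The plan is to treat homeomorphism and non-diffeomorphism separately, taking the first directly from results already established.

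\emph{Homeomorphism.} By \cref{prop:CG}, $S$ is orientation-preservingly homeomorphic to $\CP^2\#8\CPbar^{\,2}$. Taking connected sum with $2\CPbar^{\,2}$ on both sides preserves homeomorphism type, so $X^+\homeo\CP^2\#10\CPbar^{\,2}=X^-$. As a cross-check, both manifolds are simply connected, smooth (so their Kirby--Siebenmann invariants vanish), and have odd intersection form of rank $11$ and signature $-9$, namely $I_{1,10}$. Freedman's classification \cite{Freedman1982} therefore gives the homeomorphism again. This is the content of part~(i) of \cref{thm:main}.

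\emph{No orientation-preserving diffeomorphism.} Suppose $f\colon X^-\to X^+$ were an orientation-preserving diffeomorphism. Then $f^*\omega_m^+$ is a symplectic form on $X^-$ compatible with its orientation, and pulling back preserves symplectic Kodaira dimension, so $\ks(X^-,f^*\omega_m^+)=2$. On the other hand, Li's form-independence theorem \cite[Theorem~2.6]{Li2006} says $\ks$ depends only on the oriented smooth manifold, so $\ks(X^-,f^*\omega_m^+)=\ks(X^-,\omega_m^-)=-\infty$ by \cref{prop:minus}. This contradicts \cref{prop:plus}. Only one value of $m\ge m_0$ is needed to run this argument.

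\emph{No orientation-reversing diffeomorphism.} An orientation-reversing diffeomorphism would force $\sigma(X^-)=-\sigma(X^+)$. But both signatures equal $-9\neq 9$ by \eqref{eq:plus-topology} and \eqref{eq:minus-topology}, so this case is also impossible.

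Nothing here is technically difficult. The only point needing care is invoking form-independence of $\ks$ rather than comparing the particular forms directly. The numerical data $K_\omega^2$ and $K_\omega\cdot[\omega]$ agree on the nonminimal manifolds, so the contradiction must come from the minimal models, and that is exactly what Li's theorem packages.
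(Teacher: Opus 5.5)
Your proposal is correct and follows essentially the same route as the paper. The homeomorphism comes from \cref{prop:CG} plus connected sum with $2\CPbar^{\,2}$, and orientation-preserving diffeomorphisms are ruled out by Li's form-independence of $\ks$ (your pullback of $\omega_m^+$ just spells out that step), while orientation-reversing ones are ruled out because both signatures equal $-9$; the Freedman cross-check is harmless but redundant.
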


\begin{proof}
By Li's form-independence theorem \cite[Theorem~2.6]{Li2006},
symplectic Kodaira
dimension is an invariant of the oriented smooth manifold.  Hence an
orientation-preserving diffeomorphism would force the two values in
\cref{thm:main} to agree, which they do not.  An orientation-reversing
diffeomorphism would imply
\[
  \sigma(X^-)=-\sigma(X^+),
\]
but both signatures are $-9$ by \eqref{eq:plus-topology} and
\eqref{eq:minus-topology}.  Thus no diffeomorphism of either orientation
exists.
\end{proof}

\section{Nonminimality and the remaining refined questions}

On a minimal symplectic four-manifold, the signs of
$K_\omega^2=c_1(\omega)^2$ and
$K_\omega\cdot[\omega]=-c_1(\omega)\cdot[\omega]$ determine symplectic
Kodaira dimension.  Consequently, two \emph{minimal} manifolds
satisfying Auroux's numerical hypotheses automatically have the same
Kodaira dimension.  A counterexample detected only by Ho--Li's invariant
must therefore use nonminimality.  

Thus the argument does not address the following refined problem.

\begin{question}\label{ques:minimal-version}
Does Auroux's Luttinger-surgery uniqueness statement hold after requiring
both symplectic four-manifolds to be minimal?  More restrictively, what
happens if the two symplectic forms are placed on the same smooth
four-manifold?
\end{question}

For the minimal version, Auroux's four numerical invariants already
force the same symplectic Kodaira dimension, so Ho--Li's obstruction
alone cannot decide the question.  For the same-smooth-manifold version,
symplectic Kodaira dimension is independent of the symplectic form by
Li's theorem; a genuinely different Luttinger-surgery invariant would
therefore be required.

\section*{Acknowledgements}
This work was supported by JSPS KAKENHI Grant Number JP26K16990
(Grant-in-Aid for Early-Career Scientists) and by the Start-up Fund of
the Kavli Institute for the Physics and Mathematics of the Universe
(Kavli IPMU), The University of Tokyo.

The counterexample construction presented in this paper was first
suggested by ChatGPT during discussions with the author.  The author independently verified
the proposal, the cited literature, and all mathematical arguments and
computations, and takes full responsibility for the content of the
manuscript.  ChatGPT was also used for exploratory bibliographic
searches and for English and \LaTeX{} editing.

\bibliographystyle{amsplain}
\bibliography{Auroux_final}

\end{document}